\newcommand{\R}{\mathbb R}
\newcommand{\N}{\mathbb N}
\newtheorem{thm}{Theorem}[section]
\newtheorem{cor}{Corollary}[section]
\newtheorem{lemma}{Lemma}[section]
\newtheorem{proposition}{Proposition}[section]
\theoremstyle{remark}
\newtheorem*{rmk}{Remark}
\numberwithin{equation}{section}
\begin{document}


\title{Reverse Loomis-Whitney inequalities via isotropicity}

\author[D.\,Alonso]{David Alonso-Guti\'errez}
\address{\'Area de an\'alisis matem\'atico, Departamento de matem\'aticas, Facultad de Ciencias, Universidad de Zaragoza, Pedro Cerbuna 12, 50009 Zaragoza (Spain), IUMA}
\email[(David Alonso)]{alonsod@unizar.es}

\author[S. Brazitikos]{Silouanos Brazitikos}
\address{School of Mathematics and Maxwell Institute for Mathematical
	Sciences, University of Edinburgh, JCMB, Peter Guthrie Tait Road King's Buildings,
	Mayfield Road, Edinburgh, EH9 3FD, Scotland.}
\email[(Silouanos Brazitikos)]{silouanos.brazitikos@ed.ac.uk}
\subjclass[2010]{Primary 52A23, Secondary 60D05}
\thanks{The first author is partially supported by MINECO Project MTM2016-77710-P and DGA E-26\_17R}
\begin{abstract}
Given a centered convex body $K\subseteq\R^n$, we study the optimal value of the constant $\tilde{\Lambda}(K)$ such that there exists an orthonormal basis $\{w_i\}_{i=1}^n$ for which the following reverse dual Loomis-Whitney inequality holds:
$$
|K|^{n-1}\leqslant \tilde{\Lambda}(K)\prod_{i=1}^n|K\cap w_i^\perp|.
$$
We prove that $\tilde{\Lambda}(K)\leqslant(CL_K)^n$ for some absolute $C>1$ and that this estimate in terms of $L_K$, the isotropic constant of $K$, is asymptotically sharp in the sense that there exists another absolute constant $c>1$ and a convex body $K$ such that $(cL_K)^n\leqslant\tilde{\Lambda}(K)\leqslant(CL_K)^n$. We also prove more general reverse dual Loomis-Whitney inequalities as well as reverse restricted versions of Loomis-Whitney and dual Loomis-Whitney inequalities.
\end{abstract}

\date{\today}
\maketitle
\section{Introduction and notation}

The classical Loomis-Whitney inequality \cite{LW} states that given a fixed orthonormal basis $\{e_i\}_{i=1}^n$, for any convex body $K\subseteq\R^n$  we have that
\begin{equation}\label{eq:ClassicalLoomisWhitney}
|K|\leqslant\prod_{i=1}^n|P_{e_i^\perp}K|^\frac{1}{n-1},
\end{equation}
where $|\cdot|$ denotes the volume (i.e., the Lebesgue measure) in the corresponding subspace and, for any $k$-dimensional linear subspace $H\in G_{n,k}$, $P_H$ denotes the orthogonal projection onto $H$. Convex body is a compact convex set with non-empty interior and the set of all convex bodies $K\subseteq\R^n$ will be denoted by $\mathcal{K}^n$. 
The barycentre of a convex body $K\in\mathbb{R}^n$ is the vector 
$$\mathrm{bar}(K)=\frac{1}{|K|}\int_{K}x\, dx.$$ We call $K$ centered if $\mathrm{bar}(K)=0$ and  the set of all centered convex bodies will be denoted by $\mathcal{K}_c^n$. Finally, the set of all centrally symmetric convex bodies will be denoted by $\mathcal{K}_0^n$.

In \cite{M}, Meyer proved the following dual inequality: For any convex body $K\subseteq\R^n$
\begin{equation}\label{eq:MeyersInequality}
|K|\geqslant\frac{(n!)^\frac{1}{n-1}}{n^\frac{n}{n-1}}\prod_{i=1}^n|K\cap e_i^\perp|^\frac{1}{n-1}.
\end{equation}

In \cite{CGG}, Campi, Gritzmann and Gronchi considered the following problem. Given any convex body $K\subseteq\R^n$, find the largest constant $\Lambda(K)$ such that there exists an orthonormal basis $\{w_i\}_{i=1}^n$ for which the following inequality, reverse to the classical Loomis-Whitney inequality \eqref{eq:ClassicalLoomisWhitney}, holds:
\begin{equation}\label{eq:ReverseLoomisWhitneyCGG}
	|K|^{n-1}\geqslant\Lambda(K)\prod_{i=1}^n|P_{w_i^\perp}K|.
\end{equation}
In the aforementioned paper the authors were interested in finding the value of $\Lambda(n):=\inf_{K\in\mathcal{K}^n}\Lambda(K)$. They found the exact value of this constant in the planar case and gave a lower bound for its value in any dimension. Subsequently, in \cite{KSZ}, Koldobsky, Saroglou and Zvavitch gave the right asymptotic estimate for the value of the constant, of the order $\Lambda(n)^\frac{1}{n}\simeq n^{-\frac{1}{2}}$. Here, and through the whole text, the notation $a\simeq b$ is used to denote the existence of two absolute constants $c_1,c_2>0$ such that $c_1b\leqslant a\leqslant c_2b$.

In \cite{FHL}, Feng, Huang and Li considered the dual problem. Given any centered convex body $K\subseteq\R^n$, find the best constant $\tilde{\Lambda}(K)$ such that there exists an orthonormal basis $\{w_i\}_{i=1}^n$ for which the following inequality, reverse to the dual Loomis-Whitney inequality \eqref{eq:MeyersInequality} holds:
\begin{equation}\label{eq:ReverseDualLoomisWhitney}
	|K|^{n-1}\leqslant\tilde{\Lambda}(K)\prod_{i=1}^n|K\cap w_i^\perp|.
\end{equation}
They proved that if $K$ is a centrally symmetric convex body in $\R^n$ then $\tilde{\Lambda}(K)\leqslant((n-1)!)^n$. In other words, given a centered convex body $K\subseteq\R^n$, we are interested in the value of
\begin{equation}\label{eq:ReverseDualLoomisWhitney2}
\tilde{\Lambda}(K)=\min\frac{|K|^{n-1}}{\prod_{i=1}^n|K\cap w_i^\perp|},
\end{equation}
where the minimum is taken over all the orthogonal bases $\{w_i\}_{i=1}^n$ of $\R^n$. Moreover, we define
$$\tilde{\Lambda}(n)=\sup_{K\in\mathcal{K}_c^n}\tilde{\Lambda}(K)\quad \textrm{and}\quad \tilde{\Lambda}_0(n)=\sup_{K\in\mathcal{K}_0^n}\tilde{\Lambda}(K),$$
where the supremum is taken over all centered convex bodies $K$ in $\mathbb{R}^n$ and over all centrally symmetric convex bodies respectively.

In this note, we describe the exact asymptotic behavior of $\tilde{\Lambda}(n)$ given by the following theorem. The precise definition of $L_K$, the isotropic constant of $K$, will be given in Section \ref{sec:Preliminaries}.
\begin{thm}\label{thm_Lambda_n_1}
	For every centered convex body $K\in\mathcal{K}_c^n$, we have that
	$$\tilde{\Lambda}(K)\leqslant \left(2\sqrt{3}L_K\right)^n.$$
Furthermore,
	$$\left(\sqrt{2}L_n\right)^n\leqslant \tilde{\Lambda}(n)\leqslant \left(2\sqrt{3}L_n\right)^n,$$
	where $L_n=\max_{K\in\mathcal{K}^n}{L_K},$ is the maximal isotropic constant.
\end{thm}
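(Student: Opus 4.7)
The plan is to reduce both estimates to the classical Hensley-type bounds on hyperplane sections of isotropic convex bodies, namely, for every isotropic $M\subseteq \R^n$ with $|M|=1$ and every unit vector $\theta$,
\begin{equation*}
\frac{1}{2\sqrt{3}\,L_M}\leqslant |M\cap\theta^\perp|\leqslant \frac{1}{\sqrt{2}\,L_M}.
\end{equation*}
Given a centered $K\in \mathcal{K}^n_c$, I first choose $T\in GL(n)$ so that $M:=T^{-1}K$ is isotropic with $|M|=1$ (hence $L_M=L_K$). Composing with an orthogonal transformation (which leaves $\tilde\Lambda$ invariant, since rotating $K$ just rotates the admissible bases), I may assume $T$ is symmetric positive definite, with an orthonormal eigenbasis $\{w_i\}_{i=1}^n$ and eigenvalues $\lambda_i>0$.

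For the upper bound, I use this eigenbasis. Since each $w_i^\perp$ is $T$-invariant, the restricted $(n-1)$-dimensional Jacobian equals $\det(T)/\lambda_i$, so $|K\cap w_i^\perp|=\frac{\det T}{\lambda_i}\,|M\cap w_i^\perp|$. Multiplying over $i$, using $|K|=\det T$, and cancelling $\prod_i\lambda_i=\det T$ yields the clean identity
\begin{equation*}
\frac{|K|^{n-1}}{\prod_{i=1}^n|K\cap w_i^\perp|}=\prod_{i=1}^n\frac{1}{|M\cap w_i^\perp|}.
\end{equation*}
Applying the Hensley lower estimate to each factor bounds the right-hand side by $(2\sqrt{3}\,L_K)^n$, which gives $\tilde\Lambda(K)\leqslant (2\sqrt{3}\,L_K)^n$; taking the supremum over $K$ produces $\tilde\Lambda(n)\leqslant (2\sqrt{3}\,L_n)^n$.

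For the lower bound on $\tilde\Lambda(n)$ I work with an \emph{arbitrary} orthonormal basis $\{w_i\}$. The change-of-variables formula
\begin{equation*}
|TM\cap w^\perp|=\frac{\det T}{|T^*w|}\,\left|M\cap (T^*w/|T^*w|)^\perp\right|
\end{equation*}
(derived from a standard slab computation) generalizes the previous identity to
\begin{equation*}
\frac{|K|^{n-1}}{\prod_{i=1}^n |K\cap w_i^\perp|}=\frac{\prod_{i=1}^n|T^*w_i|}{\det T}\,\prod_{i=1}^n\frac{1}{|M\cap u_i^\perp|},\qquad u_i:=\frac{T^*w_i}{|T^*w_i|}.
\end{equation*}
Hadamard's inequality, applied to the columns $T^*w_i$ against the orthonormal $\{w_i\}$, makes the first factor at least $1$, while the Hensley upper estimate applied to each unit vector $u_i$ makes the second factor at least $(\sqrt{2}\,L_K)^n$. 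Therefore $\tilde\Lambda(K)\geqslant (\sqrt{2}\,L_K)^n$ for every centered $K$, and choosing $K$ to attain (or approach) the maximal isotropic constant $L_n$ yields $\tilde\Lambda(n)\geqslant (\sqrt{2}\,L_n)^n$.

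The main technical obstacle is justifying the two Hensley-type section estimates with the precise absolute constants $2\sqrt{3}$ and $\sqrt{2}$ in the \emph{centered} (not necessarily symmetric) setting. Both inequalities reduce, via the one-dimensional marginal $g_\theta(t)=|K\cap(\theta^\perp+t\theta)|$, to sharp bounds of the form $\frac{1}{2\sqrt{3}\,\sigma}\leqslant g_\theta(0)\leqslant \frac{1}{\sqrt{2}\,\sigma}$ with $\sigma^2=L_K^2$. In the symmetric case these follow from the standard analysis of the marginal, with uniform and symmetric-exponential densities as the respective extremals; extending them to the centered case will likely require a Fradelizi-type symmetrization step and constitutes the genuinely new input needed for the proof.
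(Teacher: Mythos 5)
Your proof is correct, and it is essentially the paper's argument in a different dress: both proofs rest entirely on the Hensley--Fradelizi section bounds $\frac{1}{2\sqrt3}\leqslant h_{Z_2(K)}(\theta)\,|K\cap\theta^\perp|\leqslant\frac{1}{\sqrt2}$, and your eigenbasis of $T$ is exactly the principal-axes basis of the inertia ellipsoid $Z_2(K)$ that the paper uses; your cancellation $\prod_i(\det T/\lambda_i)=(\det T)^{n-1}$ is the paper's identity $\prod_i h_{Z_2(K)}(w_i)=|Z_2(K)|/|B_2^n|=L_K^n$. Two remarks. First, the ``technical obstacle'' you flag at the end is not one: the constants $\frac{1}{2\sqrt3}$ and $\frac{1}{\sqrt2}$ are already known for general \emph{centered} bodies (Milman--Pajor for the lower constant, Fradelizi for the upper one, $c_2(n)=\frac{n}{\sqrt{2(n+1)(n+2)}}\leqslant\frac{1}{\sqrt2}$), so the paper simply cites them and applies them directly to $K$ with $|K|=1$ rather than passing to the isotropic image; no new symmetrization input is needed. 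Second, your lower bound is actually a small improvement on the paper's: by combining the change-of-variables formula for sections with Hadamard's inequality $\prod_i|T^*w_i|\geqslant|\det T|$, you obtain $\tilde\Lambda(K)\geqslant(\sqrt2 L_K)^n$ for \emph{every} centered $K$ and every orthonormal basis, whereas the paper only runs the lower-bound argument for isotropic $K$ (where $Z_2(K)$ is a ball and all bases are equivalent) and then specializes to a body with $L_K=L_n$. Both yield the stated $\tilde\Lambda(n)\geqslant(\sqrt2 L_n)^n$, but your version also upgrades the paper's remark that $\tilde\Lambda(K)^{1/n}\simeq L_K$ from isotropic to arbitrary centered bodies.
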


\begin{rmk}
Notice that the best known general upper bound for the isotropic constant (see section \ref{sec:Preliminaries}) gives an estimate
$\tilde{\Lambda}(n)\leqslant (Cn^\frac{1}{4})^n$, improving the estimate $\tilde{\Lambda}(n)\leqslant((n-1)!)^n$. Moreover, if we assume that the hyperplane conjecture is true, we have that $\tilde{\Lambda}(n)^\frac{1}{n}\simeq 1$.
\end{rmk}

As a consequence we obtain that for every centrally symmetric planar convex body $K\in\mathcal{K}_0^2$, we have that $\tilde{\Lambda}(K)\leq 1$. This inequality was proved in \cite{FHL}, where the equality cases were claimed to be characterized. Unfortunately, such characterization is not correct and, while it is true that $\tilde{\Lambda}_0(2)=1$, the equality cannot be attained for any convex body (see Section \ref{sec:PlanarCase}).

Moreover, we prove the following general reverse inequality for sections of arbitrary dimension. Before stating the theorem, we need a more general definition for $\tilde{\Lambda}(K)$ and $\tilde{\Lambda}(n)$. Let $m\geqslant 1$ and let $\mathcal{S}=(S_1,\dots, S_m)$ be a uniform cover of $[n]:=\{1,\dots, n\}$ with weights $(p_1,\dots, p_m)$, that is  $S_j\subseteq[n]$ for every $1\leqslant j\leqslant m$ and for every $1\leqslant i\leqslant n$
$$
\sum_{j=1}^mp_j\chi_{S_j}(i)=1.
$$
For any basis $\{w_i\}_{i=1}^n$ of $\R^n$, let $H_j={\rm span}\{w_k\,:\,k\in S_j\}$, $d_j={\rm dim}H_j=|S_j|$, and $p=\sum_{j=1}^mp_j$.

For every $\mathcal{S}$, we are interested in the value of
$$\tilde{\Lambda}_{\mathcal{S}}(K)=\min\frac{|K|^{p-1}}{\prod_{i=1}^n|K\cap H_j^\perp|^{p_j},}$$
where the minimum is taken over all the orthogonal bases $\{w_i\}_{i=1}^n$ of $\R^n$.
Moreover, let $$\tilde{\Lambda}_{\mathcal{S}}(n)=\sup_{K\in\mathcal{K}_c^n}\tilde{\Lambda}_{\mathcal{S}}(K),$$
where the supremum is taken over all centered convex bodies $K$ in $\mathbb{R}^n$. Then, we have the following

\begin{thm}\label{thm:ReverseDualLoomisWhitneyGeneral}
There exists an absolute constant $C>0$, such that for every centered convex body $K\in\mathcal{K}_c^n$ for any uniform cover $\mathcal{S}=(S_1,\dots, S_m)$ of $[n]$ with weights $(p_1,\dots, p_m)$, we have that
	$$
\tilde{\Lambda}_{\mathcal{S}}(K) \leqslant (CL_K)^n.
	$$
Furthermore, there exist absolute constants $c,C$ such that
	$$
\frac{(cL_n)^n}{\prod_{j=1}^mL_{d_j}^{p_jd_j}}\leqslant \tilde{\Lambda}_{\mathcal{S}}(n) \leqslant (CL_n)^n,
	$$
where $L_d=\max_{K\in\mathcal{K}^d}L_K$ is the maximal isotropic constant in $\R^d$.
\end{thm}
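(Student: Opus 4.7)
The plan is to obtain the upper bound by reducing $K$ to a position in which the linear map sending $K$ to its isotropic representative is diagonal, and to obtain the lower bound on $\tilde\Lambda_{\mathcal S}(n)$ by producing a single witness body and applying a Ball-type upper bound for its sections.

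For the upper bound I would first observe that $\tilde\Lambda_{\mathcal S}(K)$ is invariant under orthogonal changes of coordinates (a rotation of $K$ is absorbed into the rotation of the basis) and under scaling, so without loss of generality $|K|=1$ and $\mathrm{Cov}(K)=\mathrm{diag}(\sigma_1^2,\dots,\sigma_n^2)$, which forces $\prod_i\sigma_i=L_K^n$. With the standard orthonormal basis $\{e_i\}$ and the coordinate subspaces $H_j=\mathrm{span}\{e_k:k\in S_j\}$, the key observation is that the diagonal map $T=\mathrm{diag}(L_K/\sigma_1,\dots,L_K/\sigma_n)$ sends $K$ to an isotropic body of volume $1$, \emph{while} preserving each $H_j^\perp$, so
$$|K\cap H_j^\perp|=\frac{\prod_{k\notin S_j}\sigma_k}{L_K^{\,n-d_j}}\,|TK\cap H_j^\perp|.$$
The standard lower bound for sections of codimension $d_j$ of an isotropic body gives $|TK\cap H_j^\perp|\geq(c/L_K)^{d_j}$, and taking the weighted product over $j$, combined with the uniform-cover identities $\sum_j p_jd_j=n$ and $\sum_{j:\,k\notin S_j}p_j=p-1$, collapses the $\sigma_k$ factors into $(\prod_k\sigma_k)^{p-1}=L_K^{n(p-1)}$, yielding $\prod_j|K\cap H_j^\perp|^{p_j}\geq c^n/L_K^n$ and hence $\tilde\Lambda_{\mathcal S}(K)\leq(CL_K)^n$.

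For the lower bound on $\tilde\Lambda_{\mathcal S}(n)$ I would take an isotropic body $K\in\R^n$ with $|K|=1$ and $L_K\simeq L_n$ and show that $\tilde\Lambda_{\mathcal S}(K)$ already has the claimed size. For \emph{every} orthonormal basis, the companion Ball/Brascamp--Lieb upper bound for sections of an isotropic body,
$$|K\cap F|^{1/\mathrm{codim}(F)}\leq \frac{C\,L_{P_{F^\perp}K}}{L_K},$$
applied with $F=H_j^\perp$ (so $F^\perp=H_j$ has dimension $d_j$) and followed by the trivial estimate $L_{P_{H_j}K}\leq L_{d_j}$, gives $|K\cap H_j^\perp|\leq(CL_{d_j}/L_n)^{d_j}$. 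Taking the weighted product and using $\sum_j p_jd_j=n$ yields
$$\frac{|K|^{p-1}}{\prod_j|K\cap H_j^\perp|^{p_j}}\geq\frac{(cL_n)^n}{\prod_j L_{d_j}^{p_jd_j}}$$
for every orthonormal basis, and hence the same estimate for $\tilde\Lambda_{\mathcal S}(K)$ and a fortiori for $\tilde\Lambda_{\mathcal S}(n)$.

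The main obstacle is that $\tilde\Lambda_{\mathcal S}(K)$ is \emph{not} invariant under general volume-preserving linear maps, so one cannot simply assume $K$ is isotropic to begin with. The way around is the observation that the orthogonal change of coordinates which diagonalises $\mathrm{Cov}(K)$ \emph{is} allowed, and once carried out the remaining rescaling to isotropic position is itself diagonal, hence preserves every coordinate subspace; this is precisely what makes the ratio of sections of $K$ and $TK$ an explicit product of the $\sigma_k$. Once this geometric alignment is arranged, the rest of the argument is routine manipulation of exponents made painless by the uniform-cover identities $\sum_j p_j\chi_{S_j}(i)=1$.
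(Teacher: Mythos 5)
Your argument is essentially the same as the paper's. For the upper bound you diagonalise the covariance of $K$ (i.e.\ pick the principal axes of $Z_2(K)$), note the residual rescaling to isotropic position is diagonal and hence preserves the coordinate subspaces $H_j^\perp$, apply the lower estimate $|TK\cap H_j^\perp|\geqslant(c/L_K)^{d_j}$ coming from \eqref{eq:MilmaPajorHensley} together with the universal lower bound on isotropic constants, and let the uniform-cover identities collapse the $\sigma_k$ factors; this is exactly the paper's computation. For the lower bound on $\tilde\Lambda_{\mathcal S}(n)$ you take a witness isotropic body with $L_K\simeq L_n$ and use the companion upper estimate on sections, again the same as the paper.

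One small inaccuracy worth flagging: the Milman--Pajor/Ball estimate \eqref{eq:MilmaPajorHensley} involves the isotropic constant of a certain $d_j$-dimensional convex body $B(K,H_j)$ built from the marginal density of $K$ on $H_j$; this body is \emph{not} the orthogonal projection $P_{H_j}K$, so the display $|K\cap F|^{1/\mathrm{codim}(F)}\leqslant CL_{P_{F^\perp}K}/L_K$ is not a correct statement of the fact you need. However, all that your argument uses is that $B(K,H_j)$ lives in dimension $d_j$, so $L_{B(K,H_j)}\leqslant L_{d_j}$, and this is exactly what the paper uses; the misidentification of the body does not affect the bound.
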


\begin{rmk}
Again, if we assume that the hyperplane conjecture is true we have $\tilde{\Lambda}_{\mathcal{S}}(n)^\frac{1}{n}\simeq 1$.
\end{rmk}
In \cite{BGL}, the following restricted Loomis-Whitney inequality was obtained; if $S\subseteq[n]$ has  cardinality $|S|=d$ and $(S_1,\dots, S_m)$ form a uniform cover of $S$ with the same weights $(\frac{1}{k},\dots,\frac{1}{k})$,  where $m>k$, then for every convex body $K\subseteq\R^n$ and any orthogonal basis $\{e_i\}_{i=1}^n$
$$
|P_{H^\perp}K||K|^{\frac{m}{k}-1}\leqslant\frac{{{n-\frac{kd}{m}}\choose{n-d}}^\frac{m}{k}}{{{n\choose d}}^{\frac{m}{k}-1}}\prod_{j=1}^m|P_{H_j^\perp}K|^\frac{1}{k}.
$$
where $H_j=\textrm{span}\{e_k\,:\,k\in S_j\}$ and $H=\textrm{span}\{e_k\,:\,k\in S\}$. In particular, for every convex body $K\subseteq\R^n$ and any $d$-dimensional subspace $H\in G_{n,d}$, we have that for any orthogonal basis $\{e_i\}_{i=1}^d$ of $H$
\begin{equation}\label{eq:RestictedLoomisWhitney}
|P_{H^\perp}K||K|^{d-1}\leqslant\frac{{{n-1}\choose{n-d}}^d}{{{n\choose d}}^{d-1}}\prod_{j=1}^d|P_{e_j^\perp}K|.
\end{equation}
 Dual restricted inequalities were also proved in \cite{BGL}. We will also consider the problem of finding reverse restricted Loomis-Whitney inequalities and restricted dual Loomis-Whitney inequalities. We will prove the following two results:

\begin{thm}\label{thm:ReverseLocalLoomisWhitney}
	Let $K\in\mathcal{K}^n$ be a convex body and let $2\leqslant d\leqslant n-1$. For any $H\in G_{n,d}$ there exists an orthonormal basis $\{w_j\}_{j=1}^d$ of $H$ such that if we denote $H={\rm span}\{w_1,\dots,w_d\}$ then we have that
	$$
	|P_{H^\perp}K||K|^{d-1}\geqslant\frac{{{n+d}\choose {n}}}{(2n)^d}\prod_{i=1}^d|P_{w_i^\perp}K|.
	$$
\end{thm}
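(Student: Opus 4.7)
The plan is to adapt the Campi--Gritzmann--Gronchi and Koldobsky--Saroglou--Zvavitch strategy for reverse Loomis--Whitney inequalities to the restricted setting, constraining the basis to lie inside $H$. After a preliminary translation we may assume that the origin lies in the interior of $K$; the projection volumes on both sides are translation invariant. The starting point is the Rogers--Shephard inequality in its hyperplane-line form: for every unit vector $w\in\mathbb{R}^n$,
$$|K\cap\mathbb{R}w|\cdot|P_{w^\perp}K|\leq n|K|.$$
Applied to each vector of any orthonormal basis $\{w_i\}_{i=1}^d$ of $H$, taking the product yields
$$\prod_{i=1}^d|P_{w_i^\perp}K|\leq\frac{n^d|K|^d}{\prod_{i=1}^d|K\cap\mathbb{R}w_i|},$$
so the proof reduces to selecting an orthonormal basis of $H$ for which $\prod_i|K\cap\mathbb{R}w_i|$ is suitably large compared with $|K|/|P_{H^\perp}K|$.

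For this basis selection observe that $|K\cap\mathbb{R}w_i|=|L\cap\mathbb{R}w_i|$ with $L:=K\cap H$, a $d$-dimensional convex body in $H$ containing the origin. Natural candidates are the axes of John's maximal volume ellipsoid inscribed in $L$, or the eigendirections of an isotropic position of $L$ within $H$. Combining such a basis choice with a volumetric inclusion inside $L$ (e.g.~a cross-polytope spanned by $\{\pm\rho_L(w_i)w_i\}_{i=1}^d$, possibly with a symmetrization reduction) and with the standard Rogers--Shephard projection-section inequality
$$|K\cap H|\cdot|P_{H^\perp}K|\leq\binom{n}{d}|K|$$
should produce a lower bound $\prod_i|K\cap\mathbb{R}w_i|\geq c_{n,d}\cdot |K|/|P_{H^\perp}K|$ for an explicit $c_{n,d}$. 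Substituting back delivers $\prod_i|P_{w_i^\perp}K|\leq(n^d/c_{n,d})|K|^{d-1}|P_{H^\perp}K|$, which is the claimed inequality once the constants are arranged to match $\binom{n+d}{n}/(2n)^d$.

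The main obstacle is pinpointing the exact orthonormal basis in the second step and tracking the combinatorial constant precisely. The appearance of $\binom{n+d}{n}$ in the claim, rather than the $\binom{n}{d}$ that emerges naturally from the Rogers--Shephard projection-section inequality alone, strongly suggests that the final bookkeeping invokes a mixed-dimensional Rogers--Shephard-type bound---in the spirit of $|K-K|\leq\binom{2n}{n}|K|$, applied to an auxiliary body constructed from $K$ and its section $K\cap H$---rather than a purely John's ellipsoid computation, which would only produce factorially weaker constants. Getting that combinatorial identity to close is where the work concentrates.
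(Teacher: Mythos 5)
Your argument is a plan with an unclosed gap at its central step, and the route you have chosen makes that gap hard to close. After reducing via $|K\cap\mathbb{R}w_i|\,|P_{w_i^\perp}K|\leq n|K|$, everything hinges on producing an orthonormal basis of $H$ with $\prod_{i=1}^d|K\cap\mathbb{R}w_i|\geq 2^{-d}\binom{n+d}{n}\,|K|/|P_{H^\perp}K|$, and you do not prove this. Three concrete obstructions: (i) the line sections $|K\cap\mathbb{R}w_i|$ depend on where the origin sits, while the inequality being proved is translation invariant, so ``origin in the interior'' is not enough --- you would need a quantitative choice (the centroid, say) and then Fradelizi-type comparisons between central and maximal sections, which cost further dimension-dependent factors; (ii) $L=K\cap H$ need not be centrally symmetric, so the cross-polytope $\mathrm{conv}\{\pm\rho_L(w_i)w_i\}$ need not be contained in $L$ and the Campi--Gritzmann--Gronchi selection lemma does not apply to it directly; (iii) the section--projection lower bounds available for centered bodies are nowhere near $\binom{n+d}{n}/2^d$, so even granting (i) and (ii) the constant does not come out.

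The paper avoids all of this by never looking at sections of $K$: it works inside the polar projection body. Since $\rho_{\Pi^*K}(w)=1/|P_{w^\perp}K|$, the body $\Pi^*K\cap H$ is automatically centrally symmetric and its inscribed rectangular cross-polytopes encode exactly the products $\prod_i|P_{w_i^\perp}K|^{-1}$. Lemma 5.5 of \cite{CGG} applied to $\Pi^*K\cap H$ produces orthogonal $w_1,\dots,w_d\in H$ with $|\Pi^*K\cap H|\leq\prod_{i=1}^d 2\rho_{\Pi^*K}(w_i)=2^d/\prod_{i=1}^d|P_{w_i^\perp}K|$, and the ``mixed-dimensional Rogers--Shephard-type bound'' you were guessing at is precisely \eqref{eq:SectionsPolarProjectionBody}, namely $|K|^{d-1}|\Pi^*K\cap H|\geqslant\binom{n+d}{n}/\bigl(n^d|P_{H^\perp}K|\bigr)$ from \cite[Proposition 5.2]{AGJ}; combining the two displays gives the theorem at once. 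So your instinct about the origin of $\binom{n+d}{n}$ was sound, but the object to which the convolution-type inequality applies is $\Pi^*K\cap H$, not $K\cap H$, and without that shift the argument does not close.
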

\begin{rmk}
	Notice that if $d=2$ then the constant in Theorem \ref{thm:ReverseLocalLoomisWhitney} and the constant in equation \eqref{eq:RestictedLoomisWhitney} are of the same order.
\end{rmk}
\begin{thm}\label{thm:ReverseLocalDualLoomis-Whitney}
	There exists an absolute constant $C$ such that for every centered convex body $K\in\mathcal{K}_c^n$ and every $H\in G_{n,d}$ there exists an orthonormal basis $\{w_j\}_{j=1}^d$ of $H$ such that
	$$
	|K||K\cap H^\perp|^{d-1}\leq C^{d(d-1)}d^\frac{d}{2}\prod_{j=1}^d|K\cap (H^\perp\oplus\langle w_j\rangle)|.
	$$
\end{thm}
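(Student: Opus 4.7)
The plan is to translate the desired inequality into a statement about a log-concave function on the $d$-dimensional space $H$ and then to apply Ball's integral inequality together with a suitable choice of orthonormal basis. Define $f\colon H\to[0,\infty)$ by $f(y)=|K\cap(y+H^\perp)|$. By the Brunn--Minkowski inequality $f^{1/(n-d)}$ is concave on its support, so $f$ is log-concave, and since $K$ is centered $\int_H y\,f(y)\,dy=0$. Because $|K|=\int_H f$, $|K\cap H^\perp|=f(0)$ and $|K\cap(H^\perp\oplus\langle w_j\rangle)|=\int_{\R}f(tw_j)\,dt$, the claim is equivalent to producing an orthonormal basis $\{w_j\}_{j=1}^d$ of $H$ satisfying
\[
f(0)^{d-1}\int_H f \;\leq\; C^{d(d-1)}d^{d/2}\prod_{j=1}^d\int_{\R}f(tw_j)\,dt.
\]

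First I would apply Ball's integral inequality in each radial direction. For every $\theta\in S^{d-1}\cap H$ the function $r\mapsto f(r\theta)$ is log-concave on $[0,\infty)$, so Ball's inequality combined with Fradelizi's bound $\|f\|_\infty\leq e^{d}f(0)$ (needed because the radial restriction of a centered log-concave function need not be maximised at $0$) gives
\[
\int_0^\infty r^{d-1}f(r\theta)\,dr \;\leq\; e^{d(d-1)}(d-1)!\,\frac{\bigl(\int_0^\infty f(r\theta)\,dr\bigr)^d}{f(0)^{d-1}}.
\]
Integrating in polar coordinates and invoking the Ball body $K_1(f)\subset H$ with radial function $\rho_{K_1(f)}(\theta)=\int_0^\infty f(t\theta)\,dt/f(0)$ (a convex body by Ball's theorem) this yields
\[
\int_H f \;\leq\; e^{d(d-1)}\,d!\,f(0)\,|K_1(f)|.
\]

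Next, since $\rho_{K_1(f)}(w)+\rho_{K_1(f)}(-w)=\int_{\R}f(tw)\,dt/f(0)$ is the length of the chord of $K_1(f)$ through the origin in direction $w$, I would take $\{w_j\}$ to be the principal axes of an origin-centered ellipsoid $\mathcal{E}\subseteq K_1(f)$ of comparable volume; by combining John's theorem with a Fradelizi-type argument (exploiting the centering of $f$ to control the position of $K_1(f)$), one can arrange $|K_1(f)|\leq(C_1d)^d|\mathcal{E}|$ for an absolute $C_1$. Writing $\alpha_j$ for the semi-axes of $\mathcal{E}$, the inclusions $\pm\alpha_jw_j\in\mathcal{E}\subseteq K_1(f)$ give $\rho_{K_1(f)}(\pm w_j)\geq\alpha_j$, whence
\[
\prod_{j=1}^d\int_{\R}f(tw_j)\,dt \;\geq\; f(0)^d\prod_{j=1}^d2\alpha_j \;=\; \frac{2^df(0)^d|\mathcal{E}|}{|B_2^d|} \;\geq\; \frac{2^df(0)^d|K_1(f)|}{(C_1d)^d|B_2^d|}.
\]
Combining this with the Ball estimate and using $d!\leq d^d$ together with $|B_2^d|\leq(2\pi e/d)^{d/2}$ (so that $|B_2^d|^{-1}$ supplies precisely the factor $d^{d/2}$ appearing in the statement) produces the desired inequality with an absolute constant $C$.

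The main obstacle is the existence of the origin-centered inscribed ellipsoid $\mathcal{E}$ with the volume comparison $|K_1(f)|\leq(C_1d)^d|\mathcal{E}|$: John's ellipsoid of $K_1(f)$ need not be centered at the origin, and although the centering of $f$ imposes some restriction on the position of $K_1(f)$ it does not force $\mathrm{bar}(K_1(f))=0$, so one must combine John's theorem with Fradelizi-type bounds for bodies whose centroid lies at a controlled distance from the origin. An extra $(Cd)^d$ lost at this step is harmlessly absorbed into the $C^{d(d-1)}$ factor, but this is where most of the technical work will lie.
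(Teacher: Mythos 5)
Your reduction to the marginal $f(y)=|K\cap(y+H^\perp)|$ is correct, and the Ball-type estimate $\int_Hf\leq C^{d(d-1)}d!\,f(0)|K_1(f)|$ can indeed be justified (using Fradelizi's bound $\|f\|_\infty\leq e^df(0)$ for the centered log-concave $f$). But the step you yourself flag as ``the main obstacle'' is a genuine gap, not merely technical work, and it is exactly the step that carries the whole proof. To produce an origin-centered ellipsoid $\mathcal{E}\subseteq K_1(f)$ with $|K_1(f)|\leq(C_1d)^d|\mathcal{E}|$ you need the origin to be a \emph{deep} point of $K_1(f)$, i.e.\ you need $\rho_{K_1(f)}(\theta)$ and $\rho_{K_1(f)}(-\theta)$ to be comparable up to factors absorbable into $C^{d(d-1)}$; unwinding the definition, this is the claim that $\int_0^\infty f(t\theta)\,dt$ and $\int_{-\infty}^0f(t\theta)\,dt$ are comparable for every $\theta\in S^{d-1}\cap H$. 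Nothing you cite gives this. The hypothesis $\int_Hyf(y)\,dy=0$ controls the barycentre of $f$, hence (via Gr\"unbaum) its one-dimensional \emph{marginals}, but not its one-dimensional \emph{restrictions}: the quantity $\int_0^\infty f(t\theta)\,dt$ is the volume of one half of the section $K\cap(H^\perp\oplus\langle\theta\rangle)$ cut by $H^\perp$, and $H^\perp$ passes through the centroid of $K$ but not through the centroid of that section, so the split can a priori be very lopsided — and any degradation here would depend on the ambient dimension $n$, which the theorem does not allow. Without this comparison, John's theorem applied to $K_1(f)$ gives an ellipsoid centered at the John point, and the chords of $K_1(f)$ \emph{through the origin} in its principal directions can be essentially degenerate. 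So the argument is not complete as it stands, and the proposed fix (``John plus Fradelizi-type bounds on the centroid of $K_1(f)$'') is not available off the shelf: the centering of $f$ does not control the centroid of $K_1(f)$.

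For comparison, the paper's proof avoids this issue entirely by working with projections of centroid bodies rather than with $K_1(f)$. One applies the reverse Loomis--Whitney inequality \eqref{eq:ReverseLoomisWhitneyCGG}, with the sharp constant $\Lambda(d)^{1/d}\simeq d^{-1/2}$ of Koldobsky--Saroglou--Zvavitch, to the $d$-dimensional body $P_H(Z_d(K))$; this produces the orthonormal basis $\{w_j\}$ with $|P_HZ_d(K)|^{d-1}\geq(cd)^{-d/2}\prod_j|P_{H\cap w_j^\perp}Z_d(K)|$. Paouris's two-sided estimate \eqref{eq:ProjectionsCentroidBodies} then converts $|P_HZ_d(K)|$ into $|K\cap H^\perp|^{-1}$ and $|P_{H\cap w_j^\perp}Z_{d-1}(K)|$ into $|K\cap(H^\perp\oplus\langle w_j\rangle)|^{-1}$ up to $C^{d}$ factors, and the inclusion $Z_{d-1}(K)\subseteq Z_d(K)$ bridges the two. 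In effect, the ``choice of a good basis'' is delegated to the known reverse Loomis--Whitney theorem, which already contains the hard geometric input (a large inscribed box) that your ellipsoid step is trying to reprove by hand. If you want to salvage your route, the missing ingredient you must supply is precisely a lower bound for $\inf_\theta\int_0^\infty f(t\theta)\,dt\big/\int_{-\infty}^0f(t\theta)\,dt$ depending only on $d$.
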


The paper is organized as follows: In Section \ref{sec:Preliminaries} we provide the preliminary definitions and results that we use in order to prove our results. In Section \ref{sec:ProofReverseDualLoomisWhitney} we prove the reverse dual Loomis-Whitney inequalities given by Theorem \ref{thm_Lambda_n_1} and Theorem \ref{thm:ReverseDualLoomisWhitneyGeneral}. In Section \ref{sec:PlanarCase} we study the situation in the centrally symmetric planar case. Finally, in Section \ref{sec:RestrictedVersions} we prove the restricted versions provided in Theorems \ref{thm:ReverseLocalLoomisWhitney} and \ref{thm:ReverseLocalDualLoomis-Whitney}.

\section{Preliminaries}\label{sec:Preliminaries}

A convex body $K\in\mathcal{K}^n$ is called isotropic if $|K|=1$, $K$ is centered, and for every $\theta\in S^{n-1}$
$$
\int_K\langle x,\theta\rangle^2 dx=L_K^2,
$$
where $L_K$ is a constant depending on $K$, but not on $\theta$, which is called the isotropic constant of $K$. Given any convex body $K\subseteq\R^n$ there exists an  affine map $a+T$, with $a\in\R^n$ and $T\in GL(n)$ (unique up to orthogonal transformations), such that $a+TK$ is isotropic. The isotropic constant of $K$ is then defined as the isotropic constant of any of its isotropic images. Such an affine map is the solution of a minimization problem, which allows to alternatively define $L_K$ in the following way
$$
nL_K^2=\min\left\{\frac{1}{|K|^{1+\frac{2}{n}}}\int_{a+TK}|x|^2\,:\,a\in\R^n,T\in GL(n)\right\}.
$$
It is well known that the Euclidean ball $B_2^n$ is the $n$-dimensional convex body with the smallest isotropic constant and, as a consequence, there exists an absolute constant $c>0$ such that $L_K\geqslant c$ for every convex body $K\subseteq\R^n$ and any $n\in\N$ (see, for instance, \cite[Proposition 3.3.1]{BGVV}. However, it is still a major open problem (known as the slicing problem) whether there exists an absolute constant $C>0$ such that $L_n:=\max_{K\in\mathcal{K}^n}L_K\leqslant C$. This question was posed by Bourgain, who proved the upper bound $L_n\leqslant Cn^\frac{1}{4}\log n$ in \cite{Bo}. This was improved to $L_n\leqslant Cn^\frac{1}{4}$ by Klartag in \cite{K} and it is the currently best known bound. In the planar case, it is known (see \cite[Theorem 3.5.7]{BGVV} and the results in \cite{S}) that $L_2=L_{\Delta^2}=\frac{1}{\sqrt{6}\sqrt[4]{3}}$. If we restrict ourselves to centrally symmetric convex bodies and denote $L_{n,0}:=\max_{K\in\mathcal{K}_0^n}L_K$, then $L_{2,0}=L_{B_\infty^2}=\frac{1}{\sqrt{12}}$. Here $\Delta^n$ denotes the $n$-dimensional regular simplex and $B_\infty^n$ denotes the $n$-dimensional cube. These (and their affine images) are the only convex bodies on which the maximums in $\mathcal{K}^n$ (and in $\mathcal{K}_0^n$) are attained.

Given a centered convex body $K\in\mathcal{K}_c^n$ with $|K|=1$ and $p>1$, its $L_p$-centroid body $Z_p(K)$ is defined by
$$
h_{Z_p(K)}(y)=\left(\int_{K}|\langle x,y\rangle|^pdx\right)^\frac{1}{p},\quad y\in\R^n,
$$
where for any convex body $L\in\mathcal{K}^n$, $h_L(y)=\max\{\langle x,y\rangle\,:\,x\in L\}$ is the support function of $L$. Notice that, by H\"older's inequality, if $1\leqslant p\leqslant q$ then $Z_p(K)\subseteq Z_q(K)$. Moreover, for any linear map $T\in SL(n)$, with $|\textrm{det}T|=1$, $Z_p(TK)=TZ_p(K)$, and that $K$ is isotropic if and only if $Z_2(K)=L_KB_2^n$. If $K$ is not isotropic and $|K|=1$ then $Z_2(K)$ is an ellipsoid whose volume is $|Z_2(K)|=L_K^n|B_2^n|$ (see, for instance \cite[Proposition 3.1.7]{BGVV}). In \cite{H}, Hensley proved that there exist two absolute constants $c_1,c_2$ such that for every centered convex body $K\in\mathcal{K}_c^n$ with $|K|=1$ and every $\theta\in S^{n-1}$
\begin{equation}\label{eq:Hensley}
\frac{c_1}{|K\cap \theta^\perp|}\leqslant h_{Z_2(K)}(\theta)\leqslant\frac{c_2}{|K\cap \theta^\perp|}.
\end{equation}
The value of these two constants are known to be (see \cite[Corollaries 2.5 and 2.7]{MP} and \cite[Theorem 3]{Fra}) $c_1=\frac{1}{2\sqrt{3}}$ and $c_2(n)=\frac{n}{\sqrt{2(n+1)(n+2)}}\leq\frac{1}{\sqrt{2}}$. Furthermore, there is equality in the left-hand side inequality if and only if $K$ is cylindrical in the direction $\theta$ (i.e., $K=K\cap\theta^\perp+[-x,x]$ for some $x\in\R^n$) and there is equality in the right hand-side inequality if and only if $K$ is a double cone in the direction $\theta$.

The latter equation shows that for any isotropic convex body and any $\theta\in S^{n-1}$
$$
|K\cap\theta^\perp|\simeq\frac{1}{L_K}.
$$
More generally, in  \cite[Proposition 3.11]{MP} (see also \cite[Proposition 5.1.15]{BGVV}) it was proved that for any isotropic convex body $K$ and any $d$-dimensional linear subspace $H\in G_{n,d}$, there exists a $d$-dimensional convex body $B(K,H)$ such that
\begin{equation}\label{eq:MilmaPajorHensley}
|K\cap H^\perp|^\frac{1}{d}\simeq\frac{L_{B(K,H)}}{L_K}.
\end{equation}

It was proved by Paouris (see \cite[Theorem 5.1.14]{BGVV}) that there exist two absolute constants $c_1,c_2$ such that for every centered convex body $K\in\mathcal{K}^n$ with $|K|=1$ and every $d$-dimensional linear subspace $H\in G_{n,d}$
\begin{equation}\label{eq:ProjectionsCentroidBodies}
c_1\leqslant|K\cap H^\perp|^\frac{1}{d}|P_HZ_d(K)|^\frac{1}{d}\leqslant c_2.
\end{equation}

Given a convex body $K\in\mathcal{K}^n$, its polar projection body $\Pi^*K$ is the closed unit ball of the norm given by
$$
\Vert x\Vert_{\Pi^*K}=|x||P_{x^\perp}K|,
$$
which is a centrally symmetric convex body. Equivalently, its radial function is given by $\rho_{\Pi^*K}(\theta)=\frac{1}{|P_{\theta^\perp}K|}$, where for every convex body $L\in\mathcal{K}^n$ containing the origin in its interior, its radial function is defined for every $\theta\in S^{n-1}$ by $\rho_{L}(\theta)=\max\{\lambda>0\,:\,\,\lambda \theta\in L\}$. It is well known that for any convex body $K\in\mathcal{K}^n$, the affinely invariant quantity $|K|^{n-1}|\Pi^*K|$ is maximized when $K$ is an ellipsoid and minimized when $K$ is a simplex (see \cite{P} and \cite{Z}). Thus, for every convex body $K\subseteq\R^n$
$$
\frac{{{2n}\choose {n}}}{n^n}\leq|K|^{n-1}|\Pi^*K|\leq\left(\frac{|B_2^n|}{|B_2^{n-1}|}\right)^n.
$$
In \cite[Proposition 5.2]{AGJ}, it was proved that for any convex body $K\in\mathcal{K}^n$ and any $d$-dimensional linear subspace $H\in G_{n,d}$
	\begin{equation}\label{eq:SectionsPolarProjectionBody}
	|K|^{d-1}|\Pi^*K\cap H|\geqslant\frac{{{n+d}\choose{n}}}{n^d|P_{H^\perp}K|}.
	\end{equation}
\section{Proof of the reverse dual Loomis Whitney inequality}\label{sec:ProofReverseDualLoomisWhitney}

We begin this section by proving Theorem \ref{thm_Lambda_n_1}

\begin{proof}[Proof of Theorem \ref{thm_Lambda_n_1}]
	Let $K$ be a centered convex body. We can assume without loss of generality that $|K|=1$. Let $Z_2(K)\subseteq\R^n$ be the ellipsoid whose support function is given by
	$$
	h_{Z_2(K)}(w)=\left(\int_K\langle x,w\rangle^2dx\right)^\frac{1}{2}
	$$
	for every $w\in S^{n-1}$. We have that $|Z_2(K)|=L_K^n|B_2^n|$. By \eqref{eq:Hensley} there exist two absolute constants $c_1=\frac{1}{2\sqrt{3}},c_2=\frac{1}{\sqrt{2}}$ such that for every centered convex body $K\subseteq\R^n$ with volume $1$ and every $w\in S^{n-1}$
	$$
	\frac{c_1}{|K\cap w^\perp|}\leqslant h_{Z_2(K)}(w)\leqslant\frac{c_2}{|K\cap w^\perp|}.
	$$
	Therefore, taking $\{w_i\}_{i=1}^n$ the orthonormal basis given by the principal axes of the ellipsoid $Z_2(K)$ we have
	$$
	\prod_{i=1}^n|K\cap w_i^\perp|\geqslant\frac{c_1^n}{\prod_{i=1}^nh_{Z_2(K)}(w_i)}=\frac{c_1^n|B_2^n|}{|Z_2(K)|}=\frac{c_1^n}{L_K^n},
	$$
	which proves that
$$
\tilde{\Lambda}(K)\leq(CL_K)^n
$$
with $C=\frac{1}{c_1}=2\sqrt{3}$. To conclude the proof of Theorem \ref{thm_Lambda_n_1} we first notice that from the above 
$$\tilde{\Lambda}(n)\leqslant (2\sqrt{3}L_n)^n.$$
On the other hand, if we consider an isotropic convex body with isotropic constant  $L_K=L_n$ we have that for every orthonormal basis $\{w_i\}$ of $\R^n$
	$$
	\frac{c_1^n}{L_K^n}\leqslant\prod_{i=1}^n|K\cap w_i^\perp|\leqslant\frac{c_2^n}{L_K^n},
	$$
and, since $L_K=L_n$,  $$\tilde{\Lambda}(n)\geqslant (cL_n)^n,$$
with $c=\frac{1}{c_2}=\sqrt{2}$. This concludes the proof.
\end{proof}
\begin{rmk}
The latter proof shows that for every isotropic convex body, $\tilde{\Lambda}(K)^\frac{1}{n}\simeq L_K$.
\end{rmk}

\bigskip

We now move to the general case.

\begin{proof}[Proof of Theorem \ref{thm:ReverseDualLoomisWhitneyGeneral}]
	 Let $m\geqslant 1$ and let $\mathcal{S}=(S_1,\dots, S_m)$ be a uniform cover of $[n]$ with weights $(p_1,\dots, p_m)$. Let $K$ be a centered convex body. We can assume without loss of generality that $|K|=1$. Let $\{w_i\}_{i=1}^n$ be the orthonormal basis given by the principal axes of the ellipsoid $Z_2(K)$, whose support function is given by
	 $$
	 h_{Z_2(K)}(w)=\left(\int_K\langle x,w\rangle^2dx\right)^\frac{1}{2}.
	 $$
	 Let $T\in GL(n)$ be the diagonal map with respect to the orthonormal basis $\{w_i\}_{i=1}^n$ given by $T(w_i)=\lambda_iw_i$ such that $TK$ is isotropic. By \eqref{eq:MilmaPajorHensley}, there exists an absolute constant $c_1$ such that for any $1\leqslant j\leqslant m$ there exists a $d_j$-dimensional convex body $B(K,H_j)$, depending on $K$ and $H_j=\textrm{span}\{w_k\,:\,k\in S_j\}$, verifying
\begin{eqnarray*}
	|K\cap H_j^\perp|&=&|T^{-1}T(K\cap H_j^\perp)|=\prod_{k\not\in S_j}\frac{1}{\lambda_k}|TK\cap H_j^\perp|\cr
&\geqslant&\left(\frac{c_1L_{B(K,H_j)}}{L_K}\right)^{d_j}\prod_{k\not\in S_j}\frac{1}{\lambda_k}.
\end{eqnarray*}
Note that $\displaystyle{\sum_{j=1}^m p_jd_j=n}$, the m-tuple $(S_1^c,\dots, S_m^c)$ forms a uniform cover of $[n]$ with weights $(p_1^\prime,\dots, p_m^\prime)$, where $p_i^\prime=\frac{p_i}{p-1}$, and $\prod_{i=1}^n\lambda_i=|T|=1$ since $|K|=|TK|=1$. Combining the above and calling $\displaystyle{p=\sum_{i=1}^n p_i}$ we get 
	\begin{align*}
		\prod_{j=1}^m|K\cap H_j^\perp|^{p_j}&\geqslant\left(\frac{c_1}{L_K}\right)^n\prod_{j=1}^m\left(L_{B(K,H_j)}\right)^{p_jd_j}\frac{1}{\prod_{k\not\in S_j}\lambda_k^{p_j}}\\
		&=\left(\frac{c_1}{L_K}\right)^n\frac{\prod_{j=1}^m\left(L_{B(K,H_j)}\right)^{p_jd_j}}{\prod_{i=1}^n\lambda_i^{\sum_{j=1}^mp_j\chi_{S_j^c(i)}}}\\
		&=\left(\frac{c_1}{L_K}\right)^n\frac{\prod_{j=1}^m\left(L_{B(K,H_j)}\right)^{p_jd_j}}{\prod_{i=1}^n\lambda_i^{p-1}}\\
		&=\frac{c_1^n\prod_{j=1}^m\left(L_{B(K,H_j)}\right)^{p_jd_j}}{L_K^n}.
	\end{align*}
	This means that 
$$
\tilde{\Lambda}_{\mathcal{S}}(K)\leqslant\frac{(L_K)^n}{c_1^n\prod_{j=1}^m(L_{B(K,H_j)})^{p_jd_j}}.
$$
Taking now the supremum over all orthonormal bases, and taking into account that there exists a universal constant $\tilde{c}>0$ bounding from below the isotropic constant of any convex body in any dimension, we get that
$$\tilde{\Lambda}_{\mathcal{S}}(K)\leqslant\max  \frac{(L_K)^n}{c_1^n\prod_{j=1}^m(L_{B(K,H_j)})^{p_jd_j}}\leqslant (CL_K)^n,$$
with $C=\frac{1}{\tilde{c}c_1}$.

	If $K$ is isotropic then, by \eqref{eq:MilmaPajorHensley}, there exists a universal constant $c_2$ such that for any orthonormal basis $\{w_i\}_{i=1}^n$ and any uniform cover $\mathcal{S}=(S_1,\dots, S_m)$ of $[n]$ with weights $(p_1,\dots, p_m)$, we have that for every $1\leqslant j\leqslant m$ the $d_j$-dimensional convex bodies $B(K,H_j)$ associated to $K$ and $H_j=\textrm{span}\{w_k\,:\,k\in S_j\}$ verifies
	$$
	|K\cap H_j^\perp|\leqslant\left(\frac{c_2L_{B(K,H_j)}}{L_K}\right)^{d_j},
	$$
	and then for any orthonormal basis $\{w_i\}_{i=1}^n$ and any uniform cover $\mathcal{S}=(S_1,\dots, S_m)$ of $[n]$ with weights $(p_1,\dots, p_m)$
	$$
	|K|^{p-1}\geqslant \frac{(L_K)^n}{c_2^n\prod_{j=1}^m (L_{B(K,H_j)})^{p_jd_j}}\prod_{j=1}^m|K\cap H_j^\perp|^{p_j}.
	$$
Therefore, taking $c=\frac{1}{c_2}$
$$
\tilde{\Lambda}_{\mathcal{S}}(K)\geqslant \min \frac{(cL_K)^n}{\prod_{j=1}^m (L_{B(K,H_j)})^{p_jd_j}}\geqslant \frac{(cL_K)^n}{\prod_{j=1}^m L_{d_j}^{p_jd_j}},
$$
where the minimum is taken over all the orthogonal basis $\{w_i\}_{i=1}^n$ in $\R^n$. Taking the convex body with maximal isotropic constant in $\R^n$, we get the reverse bound for $\tilde{\Lambda}_{\mathcal{S}}(n)$.
\end{proof}

\begin{rmk}
Notice that if $K$ is isotropic then one has that for any orthonormal basis $\{w_i\}_{i=1}^n$
$$
 \frac{(cL_K)^n}{\prod_{j=1}^m (L_{B(K,H_j)})^{p_jd_j}}\leqslant\frac{|K|^{p-1}}{\prod_{j=1}^m|K\cap H_j^\perp|^{p_j}}\leqslant \frac{(CL_K)^n}{\prod_{j=1}^m (L_{B(K,H_j)})^{p_jd_j}},
$$
where $c,C$ are absolute constants and so
$$
\tilde{\Lambda}_{\mathcal{S}}(K)^\frac{1}{n}\simeq \min \frac{L_K}{\prod_{j=1}^m (L_{B(K,H_j)})^\frac{p_jd_j}{n}},
$$
where the minimum is taken over all orthonormal bases $\{w_i\}_{i=1}^n$ in $\R^n$.
\end{rmk}

\section{The centrally symmetric planar case}\label{sec:PlanarCase}

In this section we will study the centrally symmetric planar case and prove the following:
\begin{proposition}\label{prop:PlanarSymmetricCase}
The value of $\tilde{\Lambda}_0(2)$ is
$$
\tilde{\Lambda}_0(2)=1.
$$
However, there exists no centrally symmetric planar convex body $K\in\mathcal{K}_0^2$ such that $\tilde{\Lambda}(K)=1$.
\end{proposition}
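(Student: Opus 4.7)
The plan is to prove the proposition in three steps: first establishing $\tilde{\Lambda}_0(2)\leq 1$, then producing a sequence realizing this bound, and finally ruling out equality by a rigidity argument.

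First, for any $K\in\mathcal{K}_0^2$ one has $L_K\leq L_{2,0}=1/\sqrt{12}=1/(2\sqrt{3})$, so Theorem \ref{thm_Lambda_n_1} with $n=2$ directly gives $\tilde{\Lambda}(K)\leq(2\sqrt{3}L_K)^2\leq 1$, and hence $\tilde{\Lambda}_0(2)\leq 1$.

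Next, for the matching lower bound, I would evaluate $\tilde{\Lambda}$ explicitly on the degenerating family of rectangles $R_N=[-N,N]\times[-\tfrac{1}{4N},\tfrac{1}{4N}]$, each of unit area. Parametrizing an orthonormal basis by $\{(\cos\alpha,\sin\alpha),(-\sin\alpha,\cos\alpha)\}$, each chord through the origin has the elementary form $2\min(N/|\cos|,\tfrac{1}{4N}/|\sin|)$ (or the analogous expression with sines and cosines swapped), and the product of the two orthogonal chord lengths is piecewise elementary. Depending on whether $|\tan\alpha|$ falls below $1/(4N^2)$, between $1/(4N^2)$ and $4N^2$, or above $4N^2$, the ratio $|R_N|/(|R_N\cap w_1^\perp||R_N\cap w_2^\perp|)$ equals $\cos^2\alpha$, $2N^2\sin(2\alpha)$, or $\sin^2\alpha$ respectively. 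The minimum is attained at the transition $\tan\alpha=1/(4N^2)$, giving $\tilde{\Lambda}(R_N)=N^2/(N^2+1/(16N^2))\to 1$ as $N\to\infty$, so $\tilde{\Lambda}_0(2)\geq 1$.

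For the non-attainment, suppose for contradiction that $\tilde{\Lambda}(K)=1$ for some $K\in\mathcal{K}_0^2$. Then equality must hold in the chain $\tilde{\Lambda}(K)\leq(2\sqrt{3}L_K)^2\leq 1$. Equality on the right forces $L_K=L_{2,0}$, and by the characterization recalled in Section \ref{sec:Preliminaries}, $K$ must be an affine image of the square, i.e., a centrally symmetric parallelogram. Equality on the left, following the proof of Theorem \ref{thm_Lambda_n_1}, requires that Hensley's lower bound \eqref{eq:Hensley} be tight at each of the two principal axes of $Z_2(K)$; by the equality case recorded there, $K$ must be cylindrical in each of these two orthogonal directions, which forces $K$ to be a rectangle with sides parallel to the principal axes of $Z_2(K)$. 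But then the computation of the second step, applied to the rectangle $K=[-a,a]\times[-b,b]$ with $a\geq b>0$, identifies $\tilde{\Lambda}(K)=a^2/(a^2+b^2)<1$, contradicting the assumption.

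The delicate point is the last step: after the successive reductions, we know that $K$ must be a rectangle aligned with the principal axes of $Z_2(K)$, yet one still has to verify that a \emph{different}, non-axis-aligned orthonormal basis achieves a strictly smaller value of the ratio. This is the content of the explicit rectangle computation in the second step; the remaining work is purely a bookkeeping of equality cases in Theorem \ref{thm_Lambda_n_1} and in Hensley's inequality.
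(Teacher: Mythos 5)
Your proposal is correct and follows essentially the same route as the paper: the upper bound via the chain $\tilde{\Lambda}(K)\leq 12L_K^2\leq 1$ coming from Hensley's inequality at the principal axes of $Z_2(K)$ together with $L_{2,0}=1/\sqrt{12}$, the lower bound via degenerating unit-area rectangles (your explicit computation of $\tilde{\Lambda}([-a,a]\times[-b,b])=a^2/(a^2+b^2)$ is exactly the content of the paper's Lemma on planar boxes), and non-attainment via the equality case of Hensley's lower bound forcing $K$ to be a box. The only cosmetic difference is that your appeal to the characterization of the $L_{2,0}$-maximizers is redundant: cylindricality in the two orthogonal principal directions already forces $K$ to be a rectangle, which is all the paper uses.
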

In order to prove the proposition we will make use of the following lemma, which shows that when $K$ is a centrally symmetric planar box, one of the two orthogonal vectors for which we obtain the minimum defining $\tilde{\Lambda}(K)$ has to be the direction of one of the diagonals.
\begin{lemma}\label{lem:PlanarBoxes}
Let $K\in\mathcal{K}_0^2$ be a centrally symmetric box (i.e. a centrally symmetric orthogonal parallelepiped) with $|K|=1$. Then
$$
\tilde{\Lambda}(K)=\frac{|K|}{|K\cap{w_1^\perp}||K\cap{w_2^\perp}|}=\frac{l^4}{l^4+1}<1,
$$
where $w_1$ is the direction of a diagonal of $K$ and $w_2$ is orthogonal to $w_1$ and $l\geq 1$ is the length of the largest side of $K$.
\end{lemma}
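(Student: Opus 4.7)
The plan is to reduce the statement to a one-parameter optimization. After a rotation I may assume that $K=[-a,a]\times[-b,b]$ with $a\geqslant b>0$ and $4ab=|K|=1$, so that $l=2a$ is the longer side and $b=1/(2l)$. Every orthonormal basis of $\R^2$ has the form $w_1=(\cos\theta,\sin\theta)$, $w_2=(-\sin\theta,\cos\theta)$; using the reflection symmetries of $K$ in its coordinate axes together with the invariance of the minimization problem under the swap $(w_1,w_2)\leftrightarrow(w_2,w_1)$, it suffices to let $\theta$ range over $[0,\pi/2]$.

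Next I would compute the two section lengths explicitly. Parameterizing a point of $w_i^\perp$ as a scalar multiple of the appropriate unit vector and imposing the two rectangle inequalities yields
\[
|K\cap w_1^\perp|=2\min\!\left(\frac{a}{\sin\theta},\frac{b}{\cos\theta}\right),\qquad |K\cap w_2^\perp|=2\min\!\left(\frac{a}{\cos\theta},\frac{b}{\sin\theta}\right).
\]
The two thresholds $\theta=\arctan(b/a)$ and $\theta=\arctan(a/b)$ partition $[0,\pi/2]$ into three subintervals on which a single branch of each $\min$ is active. A direct case check, together with the normalization $4ab=1$, shows that the product $f(\theta):=|K\cap w_1^\perp|\cdot|K\cap w_2^\perp|$ equals $1/\cos^2\theta$ on the left subinterval, $4b^2/(\sin\theta\cos\theta)$ on the middle one, and $1/\sin^2\theta$ on the right one.

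Since $\tilde\Lambda(K)=|K|/\max_{\theta}f(\theta)$, the key step is to locate the global maximum of $f$ on $[0,\pi/2]$. The outer pieces are strictly monotone and tend to $1$ at the boundary, while on the middle piece $f=8b^2/\sin(2\theta)$ is minimized at $\theta=\pi/4$ and rises toward both endpoints; a brief continuity check confirms that the three pieces glue together and take the common boundary value $4b(a^2+b^2)/a=1+(b/a)^2$ at the two break points, which is therefore $f_{\max}$. These break points correspond precisely to $w_1$ (or, equivalently, $w_2$) lying along a diagonal of $K$. Substituting $a=l/2$ and $b=1/(2l)$ gives $f_{\max}=(l^4+1)/l^4$ and hence $\tilde\Lambda(K)=l^4/(l^4+1)<1$, as claimed. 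The only real obstacle is purely organizational: correctly tracking which side of each $\min$ is active in each subregion and verifying continuity of $f$ at the two break points; once that bookkeeping is in place, the extremization reduces to elementary single-variable calculus.
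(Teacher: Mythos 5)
Your proposal is correct and follows essentially the same route as the paper: both reduce to a one-parameter family of orthogonal line pairs (you use the angle $\theta$, the paper the slope $a$ of $w_2^\perp$), compute the product of the two section lengths piecewise according to which sides of the box the lines meet, and observe that the resulting piecewise function attains its maximum $1+(b/a)^2=(l^4+1)/l^4$ exactly at the break points, where one of the two lines passes through a vertex, i.e.\ one of $w_1,w_2$ is a diagonal direction. All of your branch formulas, the continuity check at the break points, and the final substitution $b/a=1/l^2$ check out.
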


\begin{rmk}
If we do not assume $|K|=1$, then $l^2>1$ is the quotient of the largest side and the shortest side of the box.
\end{rmk}
\begin{proof}
We can assume without loss of generality that the sides of $K$ are parallel to the coordinate axes. Let $l$ denote the length of the vertical side of the box, which we can assume to be the longest one. Then $l\geqslant 1$ and
$$K=\textrm{conv}\left\{\left(\frac{1}{2l},\frac{l}{2}\right),\left(-\frac{1}{2l},\frac{l}{2}\right),\left(\frac{1}{2l},-\frac{l}{2}\right),\left(-\frac{1}{2l},-\frac{l}{2}\right)\right\}.$$
Let us take $w_2^\perp=\{(x,y)\in\R^2\,:\, y=ax, a\in\R\}$ a generic linear hyperplane and $w_1$ an orthogonal vector to $w_2$. Thus, $w_1^\perp=\left\{(x,y)\in\R^2\,:\,y=-\frac{1}{a}x\right\}$. Notice that if $a\in\left[l^2,\infty\right)$ then $w_2^\perp$ intersects with the boundary of $K$, $\partial K$, in the horizontal sides at the points $P_1=\left(\frac{l}{2a},\frac{l}{2}\right)$ and $-P_1$ and $w_1^\perp$ in the vertical sides at the points $P_2=\left(\frac{1}{2l},-\frac{1}{2al}\right)$ and $-P_2$, while if $a\in\left[\frac{1}{l^2},l^2\right]$ both $w_1^\perp, w_2^\perp$ intersect $\partial K$ in the vertical sides, being $w_2^\perp\cap\partial K$ the points $P_1^\prime=\left(\frac{1}{2l},\frac{a}{2l}\right)$ and $-P_1^\prime$, and $w_1^\perp\cap\partial K$ the points $P_2^\prime=\left(\frac{1}{2l},-\frac{1}{2al}\right)$ and $-P_2^\prime$.

Therefore, if $a\in\left[l^2,\infty\right)$, we have that
$$
|K\cap w_1^\perp||K\cap w_2^\perp|=1+\frac{1}{a^2}
$$
and if $a\in\left[\frac{1}{l^2},l^2\right]$
$$
|K\cap w_1^\perp||K\cap w_2^\perp|=\frac{1}{l^2}\left(a+\frac{1}{a}\right)
$$
we have that $|K\cap w_1^\perp||K\cap w_2^\perp|$ is maximized  in $a\in\left[\frac{1}{l^2},\infty\right)$ for the values $a=l^2$ and $a=\frac{1}{l^2}$, which correspond to the cases in which either $w_2^\perp$ or $w_1^\perp$ passes through one of the vertices of the box. If this is the case,
$$
|K\cap w_1^\perp||K\cap w_2^\perp|=\frac{l^4+1}{l^4}.
$$
Since $K$ is symmetric with respect to the coordinate axes, we have that for any $a\in\left(-\infty,\frac{1}{l^2}\right)$ there exists another pair of orthogonal lines $\tilde{w}_1^\perp,\tilde{w}_2^\perp$ described as before by a parameter $a_1\in\left[\frac{1}{l^2},\infty\right)$ for which
$$
|K\cap w_1^\perp||K\cap w_2^\perp|=|K\cap \tilde{w}_1^\perp||K\cap \tilde{w}_2^\perp|.
$$
Since in the case where $w_1,w_2$ are the coordinate vectors we have $|K\cap w_1^\perp||K\cap w_2^\perp|=1$, it follows that
$$
\max|K\cap{w_1^\perp}||K\cap{w_2^\perp}|=\frac{l^4+1}{l^4},
$$
where the maximum is taken over all the pairs of orthogonal vectors in $\R^2$, and it is attained when one of the two vectors is the direction of the diagonal of $K$.
\end{proof}

Let us now prove Proposition \ref{prop:PlanarSymmetricCase}:

\begin{proof}[Proof of Proposition \ref{prop:PlanarSymmetricCase}]
We argue like in the proof of Theorem \ref{thm_Lambda_n_1}. For any $K\in\mathcal{K}_0^2$ with $|K|=1$, if $w_1,w_2$ are the principal axes of the inertia ellipsoid $Z_2(K)$ of $K$,  and taking into account that $L_{2,0}=\frac{1}{\sqrt{12}}$, we have 
\begin{equation}\label{eq:ChainOfEquations}
\tilde{\Lambda}(K)\leqslant\frac{|K|}{|K\cap{w_1^\perp}||K\cap{w_2^\perp}|}\leqslant12L_K^2\leqslant 1.
\end{equation}
Besides, by Lemma \ref{lem:PlanarBoxes}, we have that
$$
\tilde{\Lambda}_0(2)\geqslant\lim_{l\to\infty}\frac{l^4}{l^4+1}=1.
$$
Therefore, $\tilde{\Lambda}_0(2)=1$. If there exists a convex body $K$ (we can assume that $|K|=1$) such that $\tilde{\Lambda}(K)=1$ then for such $K$ all the inequalities in \eqref{eq:ChainOfEquations} are equalities. In particular, if we have equality in the second inequality, $K$ is cylindrical both with respect to $w_1$ and $w_2$, which implies that $K$ is a box. But in this case, the Lemma \ref{lem:PlanarBoxes} gives $\tilde{\Lambda}(K)<1$.
\end{proof}

\begin{rmk}
In \cite{FHL}, the authors claimed that if $K\in\mathcal{K}_0^2$, then $\tilde{\Lambda}(K)=1$ if and only if $K$ is a parallelogram with one of its diagonals perpendicular to the edges. The following example shows that such characterization was not correct. Let
$$
K=\textrm{conv}\left\{\left(0,\frac{1}{2}\right),\left(1,\frac{1}{2}\right),\left(0,-\frac{1}{2}\right),\left(-1,-\frac{1}{2}\right)\right\},
$$
which is a symmetric parallelogram with the diagonal from $\left(0,\frac{1}{2}\right)$ to $\left(0,-\frac{1}{2}\right)$ perpendicular to the edge from $\left(0,\frac{1}{2}\right)$ to $\left(1,\frac{1}{2}\right)$. Notice that $|K|=1$ and if we take $w_1$ in the direction of the diagonal from $\left(1,\frac{1}{2}\right)$ to $\left(-1,-\frac{1}{2}\right)$, we have that $w_1^\perp$ intersects the boundary of $K$ at the points $P=\left(-\frac{1}{6},\frac{1}{3}\right)$ and $-P$ and then, taking $w_2$ orthogonal to $w_1$ we have that
$$
\frac{|K|}{|K\cap{w_1^\perp}||K\cap{w_2^\perp}|}=\frac{3}{5}<1.
$$
Thus, it is not true that $\tilde{\Lambda}(K)=1$.
\end{rmk}
\section{Restricted Versions}\label{sec:RestrictedVersions}

In this section we will prove reverse versions of restricted Loomis-Whitney and restricted dual Loomis-Whitney inequalities. We start proving Theorem \ref{thm:ReverseLocalLoomisWhitney}.

\begin{proof}[Proof of Theorem \ref{thm:ReverseLocalLoomisWhitney}]
	Let $K\in\mathcal{K}^n$, $H\in G_{n,d}$ and let $\Pi^*K$ be the polar projection body of $K$. Since $\Pi^*K$ is a centrally symmetric convex body, $\Pi^*K\cap H$ is a centrally symmetric convex body in $H$, using \cite[Lemma 5.5]{CGG}, there exists a rectangular cross-polytope $C$ contained in $\Pi^*K\cap H$ such that
	$$
	|\Pi^*K\cap H|\leqslant d!|C|.
	$$
	That is, there exist  $d$ orthogonal vectors $\{w_i\}_{i=1}^d\in S^{n-1}\cap H$  such that
	$$
	C={\rm conv}\{\pm\rho_{\Pi^*K}(w_i)w_i\}_{i=1}^d\subseteq\Pi^*K\cap H
	$$
	and
	$$
	|\Pi^*K\cap H|\leqslant d!|C|=\prod_{i=1}^d2\rho_{\Pi^*K}(w_i)=\frac{2^d}{\prod_{i=1}^d|P_{w_i^\perp}K|}.
	$$
	Since, by \eqref{eq:SectionsPolarProjectionBody}, we have
	$$
	|\Pi^*K\cap H|\geqslant\frac{{{n+d}\choose{n}}}{n^d|K|^{d-1}|P_{H^\perp}K|},
	$$
	we obtain
	$$
	|P_{H^\perp}K||K|^{d-1}\geqslant\frac{{{n+d}\choose {n}}}{(2n)^d}\prod_{i=1}^d|P_{w_i^\perp}K|.
	$$
\end{proof}


Let us now prove the restricted dual Loomis-Whitney inequality given in Theorem \ref{thm:ReverseLocalDualLoomis-Whitney}.
\begin{proof}[Proof of Theorem \ref{thm:ReverseLocalDualLoomis-Whitney}]
	Let $K\in\mathcal{K}_c^n$ be a centered convex body. We can assume, without loss of generality, that $|K|=1$. If $H\in G_{n,d}$, by the reverse Loomis-Whitney inequality \eqref{eq:ReverseLoomisWhitneyCGG} applied to the convex body $P_H(Z_d(K))$, with the value of the constant estimated in \cite{KSZ}, there exists an absolute constant $c$ and an orthonormal basis $\{w_j\}_{j=1}^d$ of $H$ such that
	$$
	|P_H(Z_d(K))|^{d-1}\geqslant\frac{1}{(cd)^\frac{d}{2}}\prod_{j=1}^d|P_{H\cap w_j^\perp}Z_d(K)|.
	$$
	Using \eqref{eq:ProjectionsCentroidBodies}, we get that there exist two absolute constants $c_1,c_2$ such that
	$$
	c_1^d\leqslant|K\cap H^\perp||P_{H}Z_d(K)|\leqslant c_2^d.
	$$
Therefore, for every $1\leqslant j\leqslant d$
	$$
	c_1^{d-1}\leqslant|K\cap (H^\perp\oplus\langle w_j\rangle)||P_{H\cap w_j^\perp}Z_{d-1}(K)|\leqslant c_2^{d-1}.
	$$
Combining the above with the fact that $Z_{d-1}(K)\subseteq Z_d(K)$, it follows that
	$$
	\frac{c_2^{d(d-1)}}{|K\cap H^\perp|^{d-1}}\geqslant\frac{c_1^{d(d-1)}}{(cd)^\frac{d}{2}}\frac{1}{\prod_{j=1}^d|K\cap (H^\perp\oplus\langle w_j\rangle)|},
	$$
which gives the result.
\end{proof}

\bibliographystyle{plain}
\bibliography{biblio}
\end{document}